\theoremstyle{plain}
\newtheorem{theorem}{Theorem}
\newtheorem{lemma}[theorem]{Lemma}
\numberwithin{theorem}{section}
\newcommand{\lz}{\left(}
\newcommand{\pz}{\right)}
\renewcommand{\epsilon}{\varepsilon}
\newcommand{\C}{\mathbb{C}}
\newcommand{\Z}{\mathbb{Z}}
\newcommand{\R}{\mathbb{R}}
\newcommand{\lab}{\left|}
\newcommand{\rab}{\right|}
\newcommand{\unitint}{1_{[0,1]}}
\newcommand{\re}{\mathrm{Re}}
\newcommand{\im}{\mathrm{Im}}
\newcommand{\bfrac}[2]{\lz\frac{#1}{#2}\pz}
\newcommand{\msection}[1]{\section{\large \bf #1}}
\newcommand{\sw}{S(X,Y;\phi,\psi)}
\newcommand{\s}{S(X,Y)}
\newcommand{\res}{\mathrm{res}}
\renewcommand{\mod}[1]{\text{ (mod $#1$)}}
\newcommand{\odd}{\mathrm{\ odd}}
\renewcommand{\phi}{\varphi}
\newcommand{\leg}[2]{\left(\frac{#1}{#2}\right)}
\begin{document}
	
	\title[Mean value of real characters]{Mean value of real characters using a double Dirichlet series} 
	
	\author{M. \v Cech}
	
	\address[Martin \v Cech]
	{Concordia University, Montreal, Canada}
	\email{martin.cech@concordia.ca}

\begin{abstract} We study the double character sum $\sum\limits_{\substack{m\leq X,\\m\odd}}\sum\limits_{\substack{n\leq Y,\\n\odd}}\leg mn$ and its smoothly weighted counterpart. An asymptotic formula with power saving error term was obtained by Conrey, Farmer and Soundararajan by applying the Poisson summation formula. The result is interesting, because the main term involves a non-smooth function. In this paper, we apply the inverse Mellin transform twice and study the resulting double integral that involves a double Dirichlet series. This method has two advantages -- it leads to a better error term, and the surprising main term naturally arises from three residues of the double Dirichlet series.
\end{abstract}	

\maketitle

\msection{Introduction}
We study the double character sum\begin{equation}\label{DoubleSum}
\s:=\sum_{\substack{m\leq X,\\m\odd}}\sum_{\substack{n\leq Y,\\n\odd}}\leg mn.
\end{equation}
This sum was studied by Conrey, Farmer and Soundararajan in \cite{CFS}, where the authors give an asymptotic formula valid for all large $X$ and $Y$.

If $Y=o(X/\log X)$, then the main term of $S(X,Y)$ comes from the terms where $n$ is a square, and the error term can be estimated using the P\'olya-Vinogradov inequality. In particular, we get that in this range, \begin{equation}\label{PolyaVinogradov}
\s=\frac{2}{\pi^2}XY^{1/2}+O(Y^{3/2}\log Y+Y^{1/2+\epsilon}+X\log Y),
\end{equation}
and similarly for $X=o(Y/\log Y)$.

Conrey, Farmer and Soundararajan showed that there is a transition in the behavior of $S(X,Y)$ when $X,Y$ are of similar size. In particular, they proved the following asymptotic formula, which is valid for all large $X,Y$: \begin{equation}\label{AsymptoticFromCFS}
S(X,Y)=\frac2{\pi^2}X^{3/2}C\lz\frac YX\pz+O\lz\lz XY^{7/16}+YX^{7/16}\pz\log(XY)\pz,
\end{equation} where \begin{equation}\label{DefinitionOfC}
C(\alpha)=\alpha+\alpha^{3/2}\frac2{\pi}\sum_{k=1}^\infty\frac1{k^2}\int_{0}^{1/\alpha}\sqrt y\sin\bfrac{\pi k^2}{2y} dy.
\end{equation} The size of the main term in this formula is $XY^{1/2}+YX^{1/2}$, so it is always larger than the error term. The result is interesting, because $C(\alpha)$ is a non-smooth function. For a heuristic explanation why such functions arise in this type of problems, see the first section in \cite{Pet} and the references therein. 

Conrey, Farmer and Soundararajan also gave the following asymptotic estimates for $C(\alpha)$: \begin{equation}\label{AsymptoticEstimateForC1}
C(\alpha)=\sqrt\alpha+\frac{\pi}{18}\alpha^{\frac32}+O\lz\alpha^{5/2}\pz\hbox{ \ as $\alpha\rightarrow0$},
\end{equation}
and \begin{equation}\label{AsymptoticEstimateForC2}
C(\alpha)=\alpha+O\lz\alpha^{-1}\pz\hbox{ \ as $\alpha\rightarrow\infty$}.
\end{equation}

To prove \eqref{AsymptoticFromCFS}, Conrey, Farmer and Soundararajan applied the Poisson summation formula and estimated the sums of Gauss sums which appeared in the computation. Similar techniques were used in the work of Gao and Zhao to compute the mean value in other families of characters, such as cubic and quartic Dirichlet characters \cite{GZ2}, and some quadratic, cubic and quartic Hecke characters \cite{GZ1}. Gao used similar methods to compute the mean value of the divisor function twisted by quadratic characters \cite{Gao}.
\smallskip

Our approach is to rewrite $S(X,Y)$ as a double integral by using the inverse Mellin transform twice. The integral will then involve the double Dirichlet series $$A(s,w)=\sum_{m\odd}\sum_{n\odd}\frac{\leg mn}{m^wn^s},$$ which was studied by Blomer \cite{Blo}, who showed that it admits a meromorphic continuation to the whole $\C^2$ and determined the polar lines. We then shift the integrals to the left and compute the contribution of the residues. The quality of the error term depends whether we assume the truth of the Riemann Hypothesis because the zeros of $\zeta(s)$ appear in the location of the poles of $A(s,w)$, and also in the contribution of the residues. 

An interesting feature of our proof is that the 3 polar lines from which our main term arises naturally correspond to the contribution of squares (the polar lines $s=1$ and $w=1$), and the transition term where the non-smooth function appears (the polar line $s+w=3/2$).

A more general theory of multiple Dirichlet series has been developed by Bump, Chinta, Diaconu, Friedberg, Goldfeld, Hoffstein and others. We refer the reader interested in the theory and its applications to the expository articles \cite{Bum}, \cite{BFH}, \cite{CFH}, the paper \cite{DGH} or the book \cite{BFG}.

\smallskip

To state our results, we first define the smooth sum \begin{equation}\label{DefinitionOfSmoothSum}
S(X,Y;\phi,\psi)=\sum_{m,n\odd}\leg mn \phi(m/X)\psi(n/Y),
\end{equation} where $\phi,\psi$ are nonnegative smooth functions supported in $(0,1)$.

If we denote by $\hat f$ the Mellin transform of $f$ (see \eqref{DefinitionMellinTransform}), the main result is the following:
\begin{theorem}\label{MainTheoremSmoothed}
	Let $\epsilon>0$. Then for all large $X,Y,$ we have \begin{equation}\label{MainResultSmoothed}
	\sw=\frac{2}{\pi^2}\cdot X^{3/2}\cdot D\lz\frac YX;\phi,\psi\pz+O_\epsilon(XY^\delta+YX^\delta),
	\end{equation}
	where $\delta=\epsilon$, and $$\begin{aligned}
	D(\alpha;\phi,\psi)=&\frac{\hat\phi(1)\hat\psi\bfrac12\alpha^{1/2}+\hat\psi(1)\hat\phi\bfrac12\alpha}{2}+\\[5pt]&\hspace{-50pt}+\frac1{i\sqrt\pi }\int\limits_{(3/4)}\bfrac{\alpha}{2\pi}^s\cdot\hat\phi\lz\frac32-s\pz\hat\psi(s)\Gamma\lz s-\frac12\pz\sin\bfrac{\pi s}2\zeta(2s-1)ds.
	\end{aligned}$$ If we assume the Riemann Hypothesis, then we can take $\delta=-1/4+\epsilon$.
\end{theorem}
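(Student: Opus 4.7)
The plan is to express $\sw$ as a double contour integral and then shift both contours past the polar lines of $A(s,w)$. Applying the Mellin inversion formula twice gives, for any $c_1,c_2>1$,
\begin{equation*}
\sw = \frac{1}{(2\pi i)^2}\int_{(c_1)}\int_{(c_2)} A(s,w)\,\hat\phi(w)\,\hat\psi(s)\, X^w Y^s \, ds\, dw.
\end{equation*}
This identity is legitimate because the defining series of $A(s,w)$ converges absolutely when $\Re(s),\Re(w)>1$, and $\hat\phi,\hat\psi$ decay faster than any polynomial in vertical strips (being Mellin transforms of compactly supported smooth functions), which also allows unrestricted later contour manipulations.

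By Blomer's meromorphic continuation, $A(s,w)$ extends to all of $\C^2$ with polar lines exactly at $s=1$, $w=1$, and $s+w=3/2$. I would shift the $w$-contour leftward past $w=1$, picking up a residue whose $s$-integrand involves a Dirichlet series of the shape $\sum_{k\text{ odd}}\varphi(k)k^{-2s-1}$ (from the $n=k^2$ square contribution), hence a zeta factor with a pole at $s=1/2$; pushing the remaining $s$-integral past $s=1/2$ produces precisely the $\hat\phi(1)\hat\psi(1/2)\alpha^{1/2}$ piece of $D(\alpha;\phi,\psi)$ after the overall $X^{3/2}$ is extracted. A symmetric treatment of the polar line $s=1$, followed by the subsequent $w=1/2$ residue, yields the $\hat\psi(1)\hat\phi(1/2)\alpha$ piece. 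The double integral that remains still meets the diagonal line $s+w=3/2$; its residue there, which inherits a $\Gamma(s-1/2)\sin(\pi s/2)\zeta(2s-1)$ factor from the functional equation of the quadratic $L$-functions hidden inside $A(s,w)$, collapses to a single integral on $\Re(s)=3/4$ whose integrand is exactly the one in the definition of $D$ (after the substitution $w=3/2-s$ and bookkeeping of the $(2\pi)^{-s}$ factor that emerges from the gamma-factor normalization).

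The main obstacle is bounding the residual double integral so as to achieve the advertised power saving. This requires polynomial-in-$|t|$ vertical-strip estimates for $A(s,w)$ left of all three polar lines, which one obtains from Blomer's functional equation expressing $A$ in terms of shifted copies of itself multiplied by $\zeta$-factors. Because $\zeta(s)$ enters both into those bounds and into the residue structure, the available exponent $\delta$ is controlled by what one knows about $\zeta$ in the critical strip: the convexity bound lets one push the contours to $\Re(s)=\Re(w)=\epsilon$, producing the unconditional $\delta=\epsilon$, while under RH the Lindel\"of-type bounds permit a further shift to $\Re(s)=\Re(w)=-\frac14+\epsilon$, giving the conditional improvement. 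Some care is required near the intersection points of the three polar lines and in fixing the order of shifts so that every residue is captured exactly once; the rapid vertical decay of $\hat\phi$ and $\hat\psi$ is crucial throughout in converting the strip bounds into the clean form $O_\epsilon(XY^\delta+YX^\delta)$.
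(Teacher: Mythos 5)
Your overall strategy — double Mellin inversion, meromorphic continuation of $A(s,w)$, contour shifts picking up residues at $w=1$, $s=1$ and $s+w=3/2$ — is exactly the route the paper takes, and your identification of how each polar line contributes to $D(\alpha;\phi,\psi)$ is correct (the residue at $w=1$ is indeed $\tfrac12\zeta_2(2s)/\zeta_2(2s+1)$, the Dirichlet series of $\varphi$ you describe, and the $s+w=3/2$ residue comes from Blomer's functional equation ${\bf Z}(s,w)\mapsto{\bf Z}(1-s,s+w-1/2)$).

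However, your error analysis has a genuine gap. You claim that the residual double integral is the dominant obstacle and that ``the convexity bound lets one push the contours to $\Re(s)=\Re(w)=\epsilon$'' unconditionally, and to $\Re(s)=\Re(w)=-\tfrac14+\epsilon$ under RH. Neither shift is legitimate. Since $A(s,w)=Z(s,w)/\zeta_2(2s+2w-1)$, the function $A$ has polar lines at every $s+w=(\rho+1)/2$ with $\zeta(\rho)=0$, as well as at $s+w=\tfrac12+\tfrac{k\pi i}{\log 2}$. These lines fill the region $\tfrac12<\Re(s+w)<1$ unconditionally (and $\Re(s+w)=3/4$ even under RH), so a shift of the double contour to $\Re(s)=\Re(w)=\epsilon$, i.e.\ $\Re(s+w)=2\epsilon$, crosses infinitely many of them and cannot be used to bound the integral without accounting for all those residues. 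Moreover, the exponent $\delta$ in the theorem does not actually come from the double integral: it comes from the \emph{single-variable} integrals produced by the residues at $w=1$ and $s=1$, namely $X\int_{(\delta)}\frac{Y^s\hat\psi(s)\zeta_2(2s)}{2\zeta_2(2s+1)}\,ds$ and its mirror, where the barrier is the zeros of $\zeta_2(2s+1)$ at $\Re(s)\approx 0$ unconditionally and $\Re(s)\leq -1/4$ under RH (using a bound such as $1/\zeta(\sigma+it)\ll(1+|t|)^\epsilon$ for fixed $\sigma>1/2$). For the residual double integral, the correct move is to shift only far enough that $\Re(s+w)$ stays strictly above $1$ (above $3/4$ under RH); e.g.\ the paper takes $\Re(s)=3/4$ and $\Re(w)=1/4+\epsilon$ (resp.\ $\epsilon$), giving a bound $X^{1/4+\epsilon}Y^{3/4}$ (resp.\ $X^\epsilon Y^{3/4}$) which is then absorbed into $XY^\delta+YX^\delta$. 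Your write-up should be corrected to reflect this: the zeros of $\zeta_2(2s+2w-1)$ cap how far the double contour may be moved, and the $\delta$ is set by the residue integrals, not by the residual double integral.
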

We can remove the smooth weights and obtain the following asymptotic formula for $S(X,Y)$, which improves the error term in \eqref{AsymptoticFromCFS}:
\begin{theorem}\label{MainTheorem}
	Let $\epsilon>0$. Then for all large $X,Y$, we have \begin{equation}\label{MainResult}
	S(X,Y)=\frac2{\pi^2}\cdot X^{3/2}\cdot D\bfrac YX + O_\epsilon(XY^{1/4+\epsilon}+YX^{1/4+\epsilon}),
	\end{equation}
	where \begin{equation}\label{D(alpha)}
	D(\alpha)=\sqrt\alpha+\alpha-\frac1{i\sqrt\pi }\int\limits_{(3/4)}\bfrac{\alpha}{2\pi}^s\cdot\frac{\Gamma\lz s-\frac32\pz\sin\bfrac{\pi s}2\zeta(2s-1)}{s}ds.
	\end{equation}
\end{theorem}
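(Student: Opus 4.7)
The plan is to deduce Theorem \ref{MainTheorem} from Theorem \ref{MainTheoremSmoothed} by replacing the smooth weights $\phi,\psi$ with the sharp cutoff $\1_{[0,1]}$. I would introduce smooth weights $\phi^\pm,\psi^\pm\in C_c^\infty((0,1))$ that agree with $\1_{[0,1]}$ outside transition regions of widths $\eta_1,\eta_2>0$ near each endpoint, with $\eta_1,\eta_2$ to be chosen later. Theorem \ref{MainTheoremSmoothed} then gives
$$\sw=\frac{2}{\pi^2}X^{3/2}D\lz Y/X;\phi^\pm,\psi^\pm\pz+O_\epsilon(XY^\epsilon+YX^\epsilon),$$
and the problem reduces to estimating the discrepancy $\s-\sw$ and the main-term gap $D(Y/X;\phi^\pm,\psi^\pm)-D(Y/X)$.

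For the discrepancy, I would decompose
$$\1\cdot\1-\phi^\pm\cdot\psi^\pm=(\1-\phi^\pm)\cdot\1+\phi^\pm\cdot(\1-\psi^\pm),$$
reducing the task to bounding $\sum_{m\in I,\odd}\sum_{n\leq Y,\odd}\leg mn$ with $I\subset[0,X]$ of length $\eta_1 X$ (and its symmetric counterpart). Separating the square values of $m\in I$ (of which there are $\ll\eta_1^{1/2}X^{1/2}$, each contributing $\ll Y$ to the inner sum) and applying P\'olya-Vinogradov for non-square $m$ (each yielding $\sum_n\leg mn\ll X^{1/2+\epsilon}$) gives
$$\sum_{m\in I,\odd}\sum_{n\leq Y,\odd}\leg mn\ll\eta_1^{1/2}X^{1/2}Y+\eta_1X^{3/2+\epsilon}.$$
With $\eta_1=X^{-1/2}$ this is $\ll YX^{1/4}+X^{1+\epsilon}$, within the target error. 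A symmetric estimate with $\eta_2=Y^{-1/2}$ handles the $\psi$-smoothing.

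For the main-term gap, I would use the estimate $\hat\phi^\pm(s)=\frac{1}{s}+O(\eta_1^{\min(\re(s),1)})$ (and similarly for $\psi^\pm$) on the lines $\re(s)=1,\ 1/2,\ 3/4$ entering the formula for $D(\alpha;\phi^\pm,\psi^\pm)$. Each of the four point-value corrections contributes at most $(\eta_1^{1/2}+\eta_2^{1/2})X^{3/2}(\sqrt\alpha+\alpha)\ll XY^{1/4}+X^{1/4}Y$, and the contour-integral correction is $\ll(\eta_1^{3/4}+\eta_2^{3/4})X^{3/4}Y^{3/4}$ after invoking Stirling on $\Gamma(s-1/2)\sin(\pi s/2)$ and the convexity bound on $\zeta(1/2+2it)$ to control the tails. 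With the chosen $\eta_1,\eta_2$ these also fit under the target.

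The main obstacle is that the P\'olya-Vinogradov estimate is inflexible in $\eta$: the term $\eta_1 X^{3/2+\epsilon}$ is independent of $Y$, forcing $\eta_1\leq X^{-1/2}$ (and symmetrically $\eta_2\leq Y^{-1/2}$) and leaving essentially no slack. Moreover, the contour-integral correction of size $X^{3/8}Y^{3/4}+X^{3/4}Y^{3/8}$ must be bounded by $XY^{1/4}+YX^{1/4}$ across \emph{all} regimes of $X$ and $Y$; this works because one of the two inequalities $Y\leq X^{5/4}$ or $X\leq Y^{5/4}$ always holds, but requires a short case distinction.
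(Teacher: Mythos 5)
Your plan has the right shape — approximate $\1_{[0,1]}$ by smooth cutoffs with a small transition width, bound the discrepancy via P\'olya--Vinogradov (splitting off the square $m$'s, as you do, is fine), and estimate the difference of the main terms via Mellin transform bounds. But there is a genuine gap at the very first step: you invoke Theorem \ref{MainTheoremSmoothed} as a black box, writing
$$\sw=\frac{2}{\pi^2}X^{3/2}D\lz Y/X;\phi^\pm,\psi^\pm\pz+O_\epsilon(XY^\epsilon+YX^\epsilon),$$
but the implied constant in that $O_\epsilon$ depends on $\phi$ and $\psi$ (through the decay of their Mellin transforms), and your $\phi^\pm,\psi^\pm$ depend on $\eta_1,\eta_2$, which you then choose to depend on $X$ and $Y$ (e.g.\ $\eta_1=X^{-1/2}$). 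As the transition width shrinks, the derivatives of $\phi^\pm$ blow up, and the Mellin transform bound becomes $|\hat\phi^\pm(\sigma+it)|\ll_j \eta_1^{1-j}(1+|t|)^{-j}$; that $\eta_1^{1-j}$ factor propagates into every shifted contour in the proof of Theorem \ref{MainTheoremSmoothed}. So you cannot simply quote a fixed $O_\epsilon(XY^\epsilon+YX^\epsilon)$; you must re-run the argument of Theorem \ref{MainTheoremSmoothed} tracking the dependence on the smoothing parameter. This is exactly what the paper does: with a single parameter $U$ (transition width $1/U$), it re-derives the error from the $s=1$ and $w=1$ polar lines as $(XY^\sigma+YX^\sigma)U^{1/2-2\sigma+\epsilon}$ (using Cauchy--Schwarz and the second moment of $\zeta$), the error from the final shifted double integral as $(XY)^{1/2+\epsilon}U^{\epsilon}$ (using Cauchy--Schwarz and Blomer's second-moment bound \eqref{Second moment of Z(s,w)}), and only then combines these with the discrepancy $\ll (X^{3/2}+Y^{3/2})U^{-1}\log(XY)$ and the main-term gap $\ll (XY^{1/2}+YX^{1/2})U^{-1/2}+(XY/U)^{3/4}$, optimizing over $\sigma$ and $U$. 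Your estimates for the main-term gap are in the right spirit, but they are only one of the three $\eta$-sensitive pieces.

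A second, smaller omission: even after tracking the $U$-dependence and optimizing with $U=\min\{X,Y\}$, the paper only obtains the claimed error in the range $Y^{2/5}\leq X\leq Y^{5/2}$; outside it, the theorem follows instead from the elementary asymptotic \eqref{PolyaVinogradov} together with the expansions \eqref{AsymptoticEstimateForC1}, \eqref{AsymptoticEstimateForC2} and the identity $C=D$ from Section \ref{SectionComparingMainTerms}. Your ``short case distinction'' at the end concerns a different issue (comparing $X^{3/8}Y^{3/4}$ to the target) and does not cover the extreme aspect-ratio regimes, which do need a separate argument.
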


We show in Section \ref{SectionComparingMainTerms} that $D(\alpha)=C(\alpha)$, so our main term agrees with that of Conrey, Farmer and Soundararajan.

Let us also remark that a similar asymptotic can be obtained if the integers $m,n$ were restricted to lie in a congruence class modulo $8$ by working with a suitable combination of the twisted double Dirichlet series, as defined in \eqref{ZetWithCharacters}.

\section*{\textbf{Acknowledgments}}

I would like to thank my supervisor Chantal David for her help and valuable comments, and to David Farmer for comments on an earlier version of this paper and suggestion of the proof in Section \ref{SectionComparingMainTerms}. I would also like to thank the anonymous referee for many useful comments and suggestions.

\msection{Preliminaries and notation}
Throughout the paper, $\epsilon$ will denote a sufficiently small positive number, different at each occurrence, and all implied constants are allowed to depend on $\epsilon$.

We follow the notation of \cite{Blo}. For integers $m,n$, we denote by $\chi_m(n)$ the Kronecker symbol $$\chi_m(n)=\leg mn.$$ Assume that $m$ is odd and write it as $m=m_0m_1^2$ with $m_0$ squarefree. Then $\chi_m$ is a character of conductor $|m_0|$ if $m\equiv1\mod 4$ and $|4m_0|$ if $m\equiv3\mod 4.$ We denote by $\psi_1,\psi_{-1},\psi_2,\psi_{-2}$ the four Dirichlet characters modulo 8 given by the Kronecker symbol $\psi_j(n)=\leg jn.$ We also let $$\tilde\chi_m=\begin{cases}\chi_m,&\hbox{\text{ if $m\equiv1\mod 4$,}}\\\chi_{-m},&\hbox{\text{ if $m\equiv3\mod 4$.}}\end{cases}$$ With this notation, quadratic reciprocity tells us that for odd positive integers $m,n$, \begin{equation}\label{QuadraticReciprocity}
\chi_m(n)=\tilde\chi_n(m).
\end{equation}

The fundamental discriminants $m$ correspond to primitive real characters of conductor $|m|$. In such cases, the completed L-function is $$\Lambda(s,\chi_{m})=\bfrac {|m|}\pi ^{\frac{s+a}2}\Gamma\bfrac{s+a}2L(s,\chi_{m}),$$ where $a=0$ or $1$ depending on whether the character is even or odd, i.e., whether $\chi_m(-1)=1$ or $-1$, and we have the functional equation \begin{equation}\label{FunctionalEquationDirichletLFunction}
\Lambda(s,\chi_m)=\Lambda(1-s,\chi_m).
\end{equation} 
All primitive real characters can be uniquely written as $\chi_{m_0}\psi_j$ for some positive odd squarefree integer $m_0$ and $j\in\{\pm1,\pm2\}$.

If $m$ is not a fundamental discriminant, then $\chi_m$ is a character of conductor $m_0\mid4m$, and we have \begin{equation}\label{LFunctionNonPrimitive}
L(s,\chi_m)=L(s,\chi_{m_0})\cdot\prod_{p\mid\frac{|m|}{m_0}}\lz1-\frac{\chi_{m_0}(p)}{p^s}\pz.
\end{equation}

A subscript $2$ of an L-function means that the Euler factor at $2$ is removed, so in particular \begin{equation}\label{RemovedEulerFactor}
L_2(s,\chi)=\sum_{n\odd}\frac{\chi(n)}{n^s}.
\end{equation}

We now record two estimates that will be used later. 

The first estimate holds for any $s$ with $\re(s)\geq1/2$:\begin{equation}\label{LindelofOnAverage}
\sum_{\substack{m\leq X,\\m\odd}}|L_2(s,\chi_m\psi_j)|\ll_{\epsilon} X^{1+\epsilon}|s|^{\frac14+\epsilon}.
\end{equation}
It follows after applying Hölder's inequality on the bound for the fourth moment, proved by Heath-Brown \cite[Theorem 2]{Hea}.

The second is conditional under RH, and it says that for any fixed $\sigma>1/2$, we have \begin{equation}\label{ZetaNearCriticalLine}
\lab\frac1{\zeta(\sigma+it)}\rab\ll_\epsilon(1+|t|)^\epsilon.
\end{equation} It follows from \cite[Theorem 2]{CC}.
\smallskip

For a function $f(x)$, we denote by $\hat f(s)$ its Mellin transform, which is defined as \begin{equation}\label{DefinitionMellinTransform}
\hat f(s)=\int_{0}^\infty f(x)x^{s-1}dx,
\end{equation}
when the integral converges. If $\hat f$ is analytic in the strip $a<\re(s)<b$, then the inverse Mellin transform is given by \begin{equation}\label{InverseMellinTransform}
f(x)=\frac1{2\pi i}\int\limits_{(c)}x^{-s}\hat f(s)ds,
\end{equation}
where the integral is over the vertical line $\re(s)=c$, and $a<c<b$ is arbitrary.
\smallskip

We will use the following estimate for the Gamma function, which is a consequence of Stirling's formula: for a fixed $\sigma\in\R$ and $|t|\geq1$, we have \begin{equation}\label{StirlingEstimate}
|\Gamma(\sigma+it)|\asymp e^{-|t|\frac\pi2}|t|^{\sigma-1/2}.
\end{equation} We will also use the formula \begin{equation}\label{RatioOfGammaFactors}
\frac{\Gamma\bfrac{1-s}{2}}{\Gamma\bfrac s2}=\frac{2^s\sin (\pi s/2)\Gamma(1-s)}{\sqrt\pi}.
\end{equation}

We write the functional equation for the Riemann zeta function as \begin{equation}\label{key}
\zeta(s)=\chi(s)\zeta(1-s),
\end{equation}
where 
\begin{equation}\label{key}
|\chi(\sigma+it)|\ll_{\sigma}(1+|t|)^{1/2-\sigma}.
\end{equation}

We will also use the estimate
\begin{equation}\label{Second moment of zeta}
\int_{-T}^T|\zeta(\sigma+it)|^2dt\ll T^{1+\epsilon},
\end{equation}
which is true for any $\sigma\geq1/2.$

\msection{Outline of the proof and double Dirichlet series}\label{SectionMultipleDirichletSeries}

Applying Mellin inversion to $S(X,Y;\phi,\psi)$ twice, we obtain \begin{equation}\label{ApplyingMellinInversion}
\sw=\bfrac1{2\pi i}^2\int\limits_{(\sigma)}\int\limits_{(\omega)}A(s,w)X^wY^s\hat\phi(w)\hat\psi(s)dwds,
\end{equation}
where for $\re(s)=\sigma$ and $\re(w)=\omega$ large enough, we have the absolutely convergent double Dirichlet series \begin{equation}\label{DoubleDirichletSeries}
A(s,w)=\sum_{m\odd}\sum_{n\odd}\frac{\leg mn}{m^wn^s}=\sum_{m\odd}\frac{L_2(s,\chi_m)}{m^w}.
\end{equation} We use the results of Blomer to meromorphically continue $A(s,w)$ to the whole $\C^2$, shift the two integrals to the left and compute the contribution of the crossed polar lines. 

We now cite and sketch the proof of Lemma 2 in \cite{Blo}. For two characters $\psi,\psi'$ of conductor dividing $8$, we define \begin{equation}\label{ZetWithCharacters}
Z(s,w;\psi,\psi'):=\zeta_2(2s+2w-1)\sum_{m,n\odd}\frac{\chi_m(n)\psi(n)\psi'(m)}{m^wn^s},
\end{equation} which converges absolutely if $\re(s)$ and $\re(w)$ are large enough, and we let \begin{equation}\label{DefinitionOfZet}
Z(s,w):=Z(s,w;\psi_1,\psi_1)=\zeta_2(2s+2w-1)A(s,w).
\end{equation}
We also denote $${\bf Z}(s,w;\psi)=\left(
\begin{array}{c}
Z(s,w;\psi,\psi_1)\\
Z(s,w;\psi,\psi_{-1})\\
Z(s,w;\psi,\psi_{2})\\
Z(s,w;\psi,\psi_{-2})\\
\end{array}
\right),\hspace{10pt} {\bf Z}(s,w)=\left(
\begin{array}{c}
{\bf Z}(s,w,\psi_1)\\
{\bf Z}(s,w,\psi_{-1})\\
{\bf Z}(s,w,\psi_{2})\\
{\bf Z}(s,w,\psi_{-2})\\
\end{array}
\right).$$

\begin{theorem}\label{ExtensionOfZ(s,w)}
	The functions $Z(s,w;\psi,\psi')$ have a meromorphic continuation to the whole $\C^2$ with a polar line $s+w=3/2$. There is an additional polar line at $s=1$ with residue $\res_{(1,w)}Z(s,w)=\zeta_2(2w)/2$ if and only if $\psi=\psi_1$, and an additional polar line $w=1$ with residue $\res_{(s,1)}Z(s,w)=\zeta_2(2s)/2$ if and only if $\psi'=\psi_1$. 
	
	The functions $(s-1)(w-1)(s+w-3/2)Z(s,w;\psi,\psi')$ are polynomially bounded in vertical strips, meaning that for fixed $\re(s)$ and $\re(w),$ $(s-1)(w-1)(s+w-3/2)Z(s,w;\psi,\psi')$ is bounded by a polynomial in $\im(s),\im(w)$. The functions satisfy functional equations relating ${\bf Z}(s,w)$ with ${\bf Z}(w,s)$, and ${\bf Z}(s,w)$ with ${\bf Z}(1-s,s+w-1/2)$.
\end{theorem}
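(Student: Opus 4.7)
The plan is to meromorphically continue ${\bf Z}(s,w)$ by combining the functional equation of the individual Dirichlet L-functions $L(s,\chi_m\psi)$ with a quadratic-reciprocity symmetry swapping the two summation variables, in the spirit of the theory of Weyl-group multiple Dirichlet series. First, in the region where $\re(s)$ and $\re(w)$ are both large, rewrite
$$Z(s,w;\psi,\psi')=\zeta_2(2s+2w-1)\sum_{m\odd}\frac{\psi'(m)L_2(s,\chi_m\psi)}{m^w}.$$
Decomposing $m=m_0m_1^2$ with $m_0$ odd squarefree, \eqref{LFunctionNonPrimitive} expresses $L_2(s,\chi_m\psi)$ as $L_2(s,\chi_{m_0}\psi)$ times a short Euler product over primes dividing $m_1$. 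The $m_1$-sum then converges absolutely for $\re(w)>1/2$ and yields $\zeta_2(2w)/\zeta_2(s+2w)$ up to $m_0$-dependent Euler corrections, giving an initial representation holomorphic for $\re(s)>1$, $\re(w)>1$. The potential poles at this stage come from the explicit $\zeta_2(2s+2w-1)$ factor (giving the line $s+w=3/2$) and, only when $\psi=\psi_1$, from the $m_0=1$ term for which $L_2(s,\chi_{1}\psi_1)=\zeta_2(s)$ contributes a pole at $s=1$; the corresponding pole at $w=1$ appears by symmetry when $\psi'=\psi_1$.

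Next, apply the functional equation \eqref{FunctionalEquationDirichletLFunction} to the primitive L-function $L(s,\chi_{m_0}\psi)$. This introduces a conductor factor $|m_0|^{1/2-s}$, standard gamma factors, and a root number whose dependence on $m_0\pmod 8$ and on $\mathrm{sgn}(m_0)$ causes the four components of the vector ${\bf Z}(s,w;\psi)$ to be linearly mixed upon resumming over $m_0$. The net result, after removing explicit gamma and zeta prefactors, is the functional equation ${\bf Z}(s,w)\leftrightarrow{\bf Z}(1-s,s+w-1/2)$ with a $4\times 4$ scattering matrix. The second functional equation ${\bf Z}(s,w)\leftrightarrow{\bf Z}(w,s)$ follows directly from reciprocity \eqref{QuadraticReciprocity}, which swaps the roles of $m$ and $n$ at the cost of a sign twist when $m\equiv 3\pmod 4$, again inducing a $4\times 4$ mixing of the $\psi_j$.

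These two functional equations generate a finite group of affine transformations of $(s,w)\in\C^2$, and the orbit of the absolute-convergence region under this group covers $\C^2$ up to a finite union of affine hyperplanes. Bochner's tube theorem then gives meromorphic continuation to all of $\C^2$; the polar divisor is the union of images of $s+w=3/2$, $s=1$ and $w=1$ under the group, which after cancellations collapses to the three lines asserted. The residue at $s=1$ (when $\psi=\psi_1$) is computed by isolating the square contributions $m=m_1^2$, which sum to
$$\zeta_2(2s+2w-1)\,\zeta_2(s)\cdot\frac{\zeta_2(2w)}{\zeta_2(s+2w)},$$
and whose residue at $s=1$ equals $\zeta_2(2w)/2$ after using $\res_{s=1}\zeta_2(s)=1/2$ and the cancellation $\zeta_2(2w+1)/\zeta_2(1+2w)=1$; the residue at $w=1$ follows by symmetry, while for non-square $m$ the L-function $L_2(s,\chi_m)$ is entire and contributes no pole. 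Polynomial boundedness in vertical strips is inherited from Phragm\'en--Lindel\"of bounds on the ingredients, since only finitely many functional equations are needed to reach any given strip.

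The main obstacle I anticipate is the explicit determination of the $4\times 4$ scattering matrix in the $s\mapsto 1-s$ functional equation and the verification that, together with the reciprocity swap, it generates a finite group whose orbit covers $\C^2$ up to exactly the polar lines in the statement. This is ultimately a combinatorial check in the character group modulo $8$, but it is where the vector-valued formulation becomes essential rather than cosmetic, and tracking the root numbers and conductor parities without introducing spurious polar divisors is the delicate point.
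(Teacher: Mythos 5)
Your proposal is correct and takes essentially the same approach as the paper's proof sketch (both follow Blomer's Lemma 2 in \cite{Blo}): decompose $m=m_0m_1^2$, sum the $m_1$-part to get $\zeta_2(2w)/L_2(s+2w,\chi_{m_0}\psi)$, derive the $s\leftrightarrow w$ symmetry from quadratic reciprocity and the $(s,w)\mapsto(1-s,s+w-1/2)$ symmetry from the functional equation of $L(s,\chi_{m_0}\psi)$, and patch the continuation via Bochner's tube theorem. The only notational difference is your ``$4\times 4$ scattering matrix'' language, which refers to a per-$\psi$ block of the $16\times16$ matrices $A$ and $B(s)$ that Blomer and the paper use, and your informal description of the $m_1$-sum as ``$\zeta_2(2w)/\zeta_2(s+2w)$ up to $m_0$-dependent Euler corrections'' is a looser way of saying $\zeta_2(2w)/L_2(s+2w,\chi_{m_0}\psi)$; neither affects the argument.
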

\noindent{\bf Remarks:}\begin{enumerate}\item[(i)] Blomer gives explicit $16\times16$ matrices $A$ and $B(s)$, such that ${\bf Z}(s,w)=A\cdot{\bf Z}(w,s)$, and ${\bf Z}(s,w)=B(s)\cdot{\bf Z}(1-s,s+w-1/2)$, we will use the explicit form in \eqref{ExplicitFunctionalEquationForZ} to compute the residues on the polar line $s+w=3/2$. 
	
	\item[(ii)] We can also iterate the two functional equations and obtain others, for example relating ${\bf Z}(s,w)$ with ${\bf Z}(1-s,1-w)$. Blomer also gives an almost explicit form of this case.
	\item[(iii)] For us, a polar line means that if we fix one of the variables, the resulting function of the other variable has a pole on the corresponding line with the given residue. What we state doesn't exactly hold at the points $(1/2,1)$ and $(1,1/2)$, where two of the polar lines intersect, but we will not need to know the exact behavior at these points.
\end{enumerate}

\begin{proof}[Proof sketch]
	We write the Dirichlet series for $Z(s,w;\psi,\psi')$ in two ways.
	
	First, writing $m=m_0m_1^2$ with $\mu^2(m_0)=1$, we have \begin{equation}\label{SievedZ}
	\begin{aligned}
	Z(s,w;\psi,\psi')&=\zeta_2(2s+2w-1)\sum_{\substack{m_0\odd,\\\mu^2(m_0)=1}}\frac{L_2(s,\chi_{m_0}\psi)\psi'(m_0)}{m_0^w}\times\\[5pt]&\hspace{30pt}\times\sum_{m_1\odd}\frac1{m_1^{2w}}\prod_{p|m_1}\lz1-\frac{\chi_{m_0}\psi(p)}{p^s}\pz\\&=\zeta_2(2s+2w-1)\sum_{\substack{m_0\odd,\\\mu^2(m_0)=1}}\frac{L_2(s,\chi_{m_0}\psi)\psi'(m_0)\zeta_2(2w)}{m_0^wL_2(s+2w,\chi_{m_0}\psi)}.
	\end{aligned}
	\end{equation}
	
	If $\psi$ is non-trivial, the right-hand side converges absolutely in the region $$\{(s,w):\re(w)>1\text{ and }\re(s+w)>3/2\},$$ the second condition comes from using the functional equation in the numerator when $\re(s)<1/2$. When $\psi$ is the trivial character, the summand corresponding to $m_0=1$ is $\frac{\zeta_2(s)\zeta_2(2w)}{\zeta_2(s+2w)}$, so there is a pole at $s=1$ with residue $\zeta_2(2w)/2.$ Note that the other potential polar lines coming from $\frac{\zeta_2(2w)}{L_2(s+2w,\chi_{d_0}\psi)}$ are outside of the considered region.
	
	The second way to write $Z(s,w;\psi,\psi')$ is by exchanging summations and using the quadratic reciprocity. We obtain \begin{equation}\label{ZExchangeSum}\begin{aligned}
	Z(s,w;\psi,\psi')&=\zeta_2(2s+2w-1)\sum_{m,n\odd}\frac{\chi_m(n)\psi(n)\psi'(m)}{m^wn^s}\\&=\zeta_2(2s+2w-1)\sum_{n\odd}\frac{L_2(w,\tilde\chi_n\psi')\psi(n)}{n^s}.
	\end{aligned}	
	\end{equation}
	We can again write $n=n_0n_1^2$ with $\mu^2(n_0)=1$ and obtain a series that is absolutely convergent in the region $$\{(s,w):\re(s)>1\text{ and }\re(s+w)>3/2\},$$ unless $\psi'$ is the trivial character, in which case there is a pole at $w=1$ coming from the summands when $n$ is a square, and the residue is $\zeta_2(2s)/2.$
	
	Note that \eqref{ZExchangeSum} gives a link between $Z(s,w;\psi,\psi')$ and $Z(w,s;\psi',\psi)$, which gives us a functional equation relating ${\bf Z}(s,w)$ with ${\bf Z}(w,s)$. To finish the proof and obtain the meromorphic continuation to the whole $\C^2$, we use the functional equation in the numerator of \eqref{SievedZ}, which gives a functional equation relating ${\bf Z}(s,w)$ and ${\bf Z}(1-s,s+w-1/2)$, where the change in the second coordinate comes from the conductor in the functional equation for $L(s,\chi_m)$. Notice that this change of variables interchanges $2s+2w-1$ and $2w$, leaves $s+2w$ fixed, and maps the line $w=1$ to $s+w=3/2$, which becomes a new polar line. 
	
	We can iterate the two transformations coming from \eqref{SievedZ} and \eqref{ZExchangeSum} and obtain a function meromorphic on a tube region of the form~$\{(s,w):\re(s)^2+\re(w)^2>c\}$~for some $c$. 	During this process, we obtain some additional potential polar lines, but these will be canceled by the gamma factors coming from the functional equations.  To obtain a continuation to the region $\{(s,w):\re(s)^2+\re(w)^2\leq c\}$, we use Bochner's Tube theorem from multivariable complex analysis, which states that a function that is holomorphic on a tube region can be continued to its convex hull (see \cite{Boc}).
	
	The proof that the function is polynomially bounded in vertical strips is similar to the proof of Proposition 4.11 in \cite{DGH}.
\end{proof}

We will also use the following estimate, which is Theorem 2 in \cite{Blo}.

\begin{theorem}
	For any $Y_1,Y_2\geq1$ and characters $\psi,\psi'$ modulo 8, we have \begin{equation}\label{Second moment of Z(s,w)}
	\int_{-Y_1}^{Y_1}\int_{-Y_2}^{Y_2}|Z(1/2+it,1/2+iu;\psi,\psi')|^2dudt\ll(Y_1Y_2)^{1+\epsilon}.
	\end{equation}
\end{theorem}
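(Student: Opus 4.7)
The strategy is to derive an approximate functional equation for $Z(1/2+it,1/2+iu;\psi,\psi')$ using the functional equations from Theorem~\ref{ExtensionOfZ(s,w)}, and then to bound the resulting truncated Dirichlet polynomials in $L^2$ via the Montgomery--Vaughan mean value theorem, combined with the fourth moment estimate \eqref{LindelofOnAverage}.

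Starting from the representation
$$Z(s,w;\psi,\psi') = \zeta_2(2s+2w-1)\sum_{n\text{ odd}}\frac{L_2(w,\tilde\chi_n\psi')\psi(n)}{n^s},$$
which is absolutely convergent for $\re(s)$ sufficiently large, I would introduce a smooth rapidly-decaying test function $G(z)$ with $G(0)=1$ and consider the identity
$$Z(1/2+it,1/2+iu;\psi,\psi') = \frac{1}{2\pi i}\int_{(c)}Z(1/2+it+z,1/2+iu+z;\psi,\psi')\,G(z)\frac{dz}{z},$$
for $c$ large. Shifting the contour to $\re(z)=-c$ and applying the functional equation $(s,w)\mapsto(1-s,s+w-1/2)$ from Theorem~\ref{ExtensionOfZ(s,w)} then expresses $Z(1/2+it,1/2+iu;\psi,\psi')$ as a sum of two smoothly truncated double Dirichlet polynomials, each essentially of length $(1+|t|)(1+|u|)$, coming respectively from the original and the dual side, plus residue contributions arising from crossing the polar lines $s=1$, $w=1$, $s+w=3/2$.

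For the $L^2$ bound, I would swap the order of integration and summation, fix $u$, and apply the mean value theorem for Dirichlet polynomials: for coefficients supported on $n\asymp N$,
$$\int_{-Y_1}^{Y_1}\left|\sum_n \frac{b_n(u)}{n^{1/2+it}}\right|^2 dt \ll_\epsilon (Y_1+N)^{1+\epsilon}\sum_n \frac{|b_n(u)|^2}{n}.$$
After the approximate functional equation, the relevant coefficients $b_n(u) = \psi(n)\sum_m \chi_m(n)\psi'(m)m^{-1/2-iu}W(mn/X)$ are smooth truncations of $\psi(n)L_2(1/2+iu,\tilde\chi_n\psi')$. Applying \eqref{LindelofOnAverage}, upgraded to a second moment via Cauchy--Schwarz, gives
$$\int_{-Y_2}^{Y_2}\sum_{n\asymp N}|b_n(u)|^2\,du \ll_\epsilon NY_2^{1+\epsilon}.$$
Summing dyadically over $N$ up to the truncation length $X\asymp(1+Y_1)(1+Y_2)$, and using \eqref{Second moment of zeta} via Cauchy--Schwarz to handle the factor $\zeta_2(2s+2w-1)=\zeta_2(1+2i(t+u))$ on the critical manifold, yields the claimed bound $(Y_1Y_2)^{1+\epsilon}$.

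The principal obstacle is the derivation of a sharp approximate functional equation, carefully tracking the three polar lines of $Z$ through the contour shifts; the dual Dirichlet polynomial must have length no larger than essentially $(1+|t|)(1+|u|)$ for the final bound to be optimal, which relies on the explicit gamma factor shape in the functional equation together with Stirling. A secondary difficulty is handling the zeta factor near its pole at $t+u=0$: although $Z$ itself is holomorphic on the critical manifold because the double Dirichlet series $A(s,w)$ carries a compensating zero, the decomposition produced by the approximate functional equation expresses $Z$ as a product of factors each of which may be singular there, requiring careful book-keeping to preserve integrability.
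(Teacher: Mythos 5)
The paper does not prove this statement: it cites it as Theorem~2 of \cite{Blo} and uses it as a black box, so there is no ``paper's own proof'' to compare your argument against. Your proposal must therefore be judged on its own merits, and while the general toolkit you invoke (approximate functional equation, Montgomery--Vaughan, Heath-Brown's quadratic moment bounds) is indeed the right one for such an estimate, the chain of bounds you write down does not close.

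The concrete gap is in the $u$-integral. You claim that ``\eqref{LindelofOnAverage}, upgraded to a second moment via Cauchy--Schwarz'' gives
$\int_{-Y_2}^{Y_2}\sum_{n\asymp N}|b_n(u)|^2\,du\ll NY_2^{1+\epsilon}$.
But the input behind \eqref{LindelofOnAverage} is Heath-Brown's fourth moment
$\sum_{n\leq N}^{*}|L(1/2+iu,\chi_n)|^4\ll (N(1+|u|))^{1+\epsilon}$,
and Cauchy--Schwarz converts that into
$\sum_{n\leq N}^{*}|L(1/2+iu,\chi_n)|^2\ll N^{1+\epsilon}(1+|u|)^{1/2+\epsilon}$.
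Integrating this over $|u|\leq Y_2$ yields $N^{1+\epsilon}Y_2^{3/2+\epsilon}$, not $NY_2^{1+\epsilon}$; you have lost a factor $Y_2^{1/2}$. Feeding that into the rest of your argument (the $(Y_1+N)$ factor from the $t$-integral, the dyadic sum over $N$ up to the conductor $X\asymp Y_1Y_2$) produces a bound of order $Y_1Y_2^{5/2}$ or worse, far from $(Y_1Y_2)^{1+\epsilon}$.

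More fundamentally, the asymmetry you introduce --- Montgomery--Vaughan in $t$, then replacing $b_n(u)$ by the full $L$-value and applying a moment bound in $u$ --- discards the truncation in $m$ coming from the approximate functional equation, and with it the character cancellation you need. If one instead applies Montgomery--Vaughan in both variables to the truncated double Dirichlet polynomial, one finds the resulting bound is roughly $Y_1Y_2+Y_1X+Y_2X$ with $X\asymp Y_1Y_2$, again failing by a factor $\min(Y_1,Y_2)$. The genuine input needed is Heath-Brown's quadratic large sieve (Theorem~1, not Theorem~2, of \cite{Hea}), which for a double sum $\sum_{m\leq M}^{*}\bigl|\sum_{n\leq N}^{*}a_n\chi_m(n)\bigr|^2$ gives the crucial factor $(M+N)$ in place of the trivial $MN$. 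The proof of \eqref{Second moment of Z(s,w)} in \cite{Blo} hinges on exploiting this character cancellation (in a suitable hybrid form, since one of the two ``sums'' is really an integral over $u$); your sketch never invokes it and therefore cannot reach the claimed $(Y_1Y_2)^{1+\epsilon}$. Your remark about the $\zeta_2(1+2i(t+u))$ factor near $t+u=0$ is a correct secondary concern, but the primary issue is the missing large-sieve input.
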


In the next two sections, we are going to shift the two integrals in \eqref{ApplyingMellinInversion} to the left and compute the contribution of the crossed polar lines. By Theorem~\ref{ExtensionOfZ(s,w)}, the polar lines of $A(s,w)=\frac{Z(s,w)}{\zeta_2(2s+2w-1)}$ are the following:
\begin{itemize}
	\item The polar lines of $Z(s,w)$, which give us the main term in Theorem~\ref{MainTheorem}:
	\begin{itemize} 
		\item the line $s=1$ with residue $\res_{(1,w)}A(s,w)=\frac{\zeta_2(2w)}{2\zeta_2(2w+1)}$, 
		\item the line $w=1$ with residue $\res_{(s,1)}A(s,w)=\frac{\zeta_2(2s)}{2\zeta_2(2s+1)}$, 
		\item the line $s+w=3/2$, whose residue will be computed in Lemma~\ref{LemmaResidue3/2}.
	\end{itemize}
	\item Zeros of $\zeta_2(2s+2w-1)=\zeta(2s+2w-1)\lz1-2^{1-2s-2w}\pz$, which are
	the lines $s+w=\frac{\rho+1}{2},$ where $\rho$ is such that $\zeta(\rho)=0$, or $s+w=\frac{k\pi i}{\log 2}+\frac12$ for some $k\in\Z$. All these satisfy $\re(s+w)<1$, and even $\re(s+w)\leq \frac34$ if we assume RH.
\end{itemize}

We will see that the main term comes from the polar lines of $Z(s,w)$, while the polar lines coming from the zeros of $\zeta_2(2s+2w-1)$ determine how far to the left we will be able to shift the integrals, so they give us our error term.

\msection{Contribution of the polar lines $s=1$ and $w=1$}

In this section, we shift the integrals to the left and compute the contribution of the polar lines $s=1$ and $w=1$. We begin with $\sigma=2$ and $\omega=2$ in \eqref{ApplyingMellinInversion}, where everything converges absolutely. 

Then we move the inner integral to the line $\re(w)=3/4+\epsilon$, so we obtain \begin{equation}\label{ShiftingToOmega'}\begin{aligned}
\sw=\lz \frac1{2\pi i}\pz^2&\int\limits_{(2)}\int\limits_{(3/4+\epsilon)}A(s,w)X^wY^s\hat \phi(w)\hat\psi(s)dwds+\\&+\frac1{2\pi i}\int\limits_{(2)}XY^s\hat\phi(1)\hat\psi(s)\res_{(s,1)}A(s,w)ds.
\end{aligned}\end{equation}
This shift of integrals is justified by the fast decay of the Mellin transform and polynomial boundedness of $A(s,w)$ in vertical strips.

Now we compute the second integral in \eqref{ShiftingToOmega'}, which equals \begin{equation}
\frac{\hat\phi(1)X}{2\pi i}\int\limits_{(2)}\frac{Y^s\hat\psi(s)\zeta_2(2s)}{2\zeta_2(2s+1)}ds.
\end{equation}

We again estimate this integral using the residue theorem. The integrand has the following poles:\begin{itemize}
	\item At $s=1/2$ with residue $$\frac{Y^{1/2}\hat\psi\bfrac12}{8\zeta_2(2)}=\frac{Y^{1/2}\hat\psi\bfrac12}{\pi^2}.$$
	\item Zeros of $$\zeta_2(2s+1)=\lz1-\frac{1}{2^{2s+1}}\pz\zeta(2s+1).$$ These are at the points $s=\frac{\rho-1}{2},$ where $\zeta(\rho)=0$, and $$s=\frac{k\pi i}{\log 2}-\frac12,\ k\in\Z.$$ These poles have $\re(s)<0$ and if we assume RH, they all have $\re(s)\leq-1/4$.
\end{itemize}
Therefore, we have the following:\begin{equation}\label{ComputingContributionOfFirstResidue}
\begin{aligned}
\frac{\hat\phi(1)X}{2\pi i}&\int\limits_{(2)}\frac{Y^s\hat\psi(s)\zeta_2(2s)}{2\zeta_2(2s+1)}ds=\frac{\hat\phi(1)\hat\psi\bfrac12XY^{1/2}}{\pi^2}+\frac{\hat\phi(1)X}{2\pi i}\int\limits_{(\delta)}\frac{Y^s\hat\psi(s)\zeta_2(2s)}{2\zeta_2(2s+1)}ds.
\end{aligned}
\end{equation}

Depending whether we assume RH or not, we take $\delta=-\frac14+\epsilon$ or $\delta=\epsilon$, bound the integral trivially (we use $\eqref{ZetaNearCriticalLine}$ when $\delta=-1/4+\epsilon$) and get \begin{equation}
\frac{\hat\phi(1)X}{2\pi i}\int\limits_{(2)}\frac{Y^s\hat\psi(s)\zeta_2(2s)}{2\zeta_2(2s+1)}ds=\frac{\hat\phi(1)\hat\psi\bfrac 12XY^{1/2}}{\pi^2}+O\lz XY^{\delta}\pz.
\end{equation}

Using this in \eqref{ShiftingToOmega'}, we obtain \begin{equation}
\begin{aligned}
\sw=&\frac{\hat\phi(1)\hat\psi\bfrac12XY^{1/2}}{\pi^2}+\\&+\lz\frac{1}{2\pi i}\pz^2\int\limits_{(2)}\int\limits_{(3/4+\epsilon)}A(s,w)X^wY^s\hat\phi(w)\hat\psi(s)dwds+O(XY^{\delta}),
\end{aligned}
\end{equation}

Note that when $\phi=\psi=\unitint$, the first term is $\frac{2}{\pi^{2}}XY^{1/2}$ and corresponds to the contribution when $n$ is a square.

\smallskip

Next, we exchange the integrals and shift the integral over $\re(s)=2$ to $\re(s)=3/4$, crossing the polar line at $s=1$. The computation of the residues coming from this polar line is completely analogous to the previous case, and the result is stated in the following theorem: \begin{theorem}
	Let $\epsilon>0$. Then we have: \begin{equation}\label{TheoremAfter2Shifts}\begin{aligned}
	\sum_{m\odd}\sum_{n\odd}\leg mn\phi\bfrac mX\psi\bfrac nY&=\frac{\hat\phi(1)\hat\psi\bfrac12XY^{1/2}+\hat\psi(1)\hat\phi\bfrac12YX^{1/2}}{\pi^2}+\\&\hspace{-100pt}+\lz\frac{1}{2\pi i}\pz^2\int\limits_{(3/4)}\int\limits_{(3/4+\epsilon)}A(s,w)X^wY^s\hat\phi(w)\hat\psi(s)dwds+O_\epsilon\lz YX^\delta+XY^\delta\pz,
	\end{aligned}\end{equation} where $\delta=\epsilon$. If we assume the Riemann Hypothesis, then we can take $\delta=-1/4+\epsilon$.
\end{theorem}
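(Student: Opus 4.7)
The plan is to carry out the mirror image of the shift already performed in this section, applied now to the outer $s$-integral. Starting from the displayed equation just before the theorem, I would first exchange the order of integration in the remaining double integral. Absolute convergence of the exchanged integral is justified by combining the polynomial growth of $(s-1)(w-1)(s+w-3/2)Z(s,w)$ in vertical strips from Theorem \ref{ExtensionOfZ(s,w)}, the fact that $1/\zeta_2(2s+2w-1)$ is bounded (in fact has polynomial growth via the standard zero-free region, or unconditionally for our purposes via the convexity bound) on the chosen contours $\re(s)=2$, $\re(w)=3/4+\epsilon$, and the super-polynomial vertical decay of $\hat\phi(w)$ and $\hat\psi(s)$, which holds since $\phi,\psi$ are smooth and compactly supported.

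With the $s$-integral on the inside, I would next shift its contour from $\re(s)=2$ to $\re(s)=3/4$. For fixed $w$ with $\re(w)=3/4+\epsilon$, the polar line $s+w=3/2$ sits at $\re(s)=3/4-\epsilon$, just left of the target line, and the zeros of $\zeta_2(2s+2w-1)$ contribute poles only at $\re(s)\le 1/4-\epsilon$ (even without RH), so the unique pole crossed is at $s=1$. Its residue is $\zeta_2(2w)/\bigl(2\zeta_2(2w+1)\bigr)$ by Theorem \ref{ExtensionOfZ(s,w)}. The horizontal segments joining the two contours contribute negligibly by the polynomial bound on $Z(s,w)$ combined with the rapid decay of the Mellin transforms.

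The residue contribution is the one-variable integral
\[
\frac{Y\hat\psi(1)}{2\pi i}\int\limits_{(3/4+\epsilon)}X^w\hat\phi(w)\cdot\frac{\zeta_2(2w)}{2\zeta_2(2w+1)}\,dw,
\]
which I would treat exactly as in \eqref{ComputingContributionOfFirstResidue}, swapping the roles of $(X,\phi)$ and $(Y,\psi)$: shift the $w$-contour to $\re(w)=\delta$ and collect the pole at $w=1/2$. There $\zeta_2(2w)=(1-2^{-2w})\zeta(2w)$ contributes a residue of $\tfrac14$, while $\zeta_2(2w+1)|_{w=1/2}=\zeta_2(2)=\pi^2/8$, so the residue evaluates to $\hat\psi(1)\hat\phi(1/2)YX^{1/2}/\pi^2$, matching the desired main term. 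The remaining integral on $\re(w)=\delta$ is bounded trivially by $O(YX^\delta)$, using \eqref{ZetaNearCriticalLine} to control $1/\zeta_2(2w+1)$ in the RH case $\delta=-1/4+\epsilon$.

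The main obstacle, as in the previous shift, is verifying that the horizontal tails in both contour shifts vanish in the limit. This follows from the polynomial-in-strips bound for $Z(s,w)$ from Theorem \ref{ExtensionOfZ(s,w)} together with the super-polynomial decay of $\hat\phi,\hat\psi$ on vertical lines; together these dominate the polynomial growth of $1/\zeta_2(2s+2w-1)$ on the chosen contours, which are kept away from its zeros. Once the shifts are justified, summing the new residue with the main term already produced earlier in the section yields precisely \eqref{TheoremAfter2Shifts}.
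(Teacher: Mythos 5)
Your proposal is correct and follows essentially the same approach as the paper: exchange the remaining double integral, shift the $s$-contour from $\re(s)=2$ to $\re(s)=3/4$ past the pole at $s=1$ (noting that with $\re(w)=3/4+\epsilon$ neither $s+w=3/2$ nor the zeros of $\zeta_2(2s+2w-1)$ are crossed), and then treat the resulting one-variable residue integral exactly as in \eqref{ComputingContributionOfFirstResidue} with the roles of $(X,\phi)$ and $(Y,\psi)$ interchanged, which is precisely what the paper means by ``completely analogous to the previous case.'' Your residue arithmetic ($\res_{w=1/2}\zeta_2(2w)=1/4$, $\zeta_2(2)=\pi^2/8$, giving $\hat\psi(1)\hat\phi(1/2)YX^{1/2}/\pi^2$) matches the paper's computation.
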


\msection{Contribution of the polar line $s+w=3/2$}
Before further shifting the integrals, we need to compute the residues on the polar line $s+w=3/2$, which is done in the following lemma.
\begin{lemma}\label{LemmaResidue3/2}
	For all $s\in\C$, \begin{equation}\label{ResidueOns+w=3/2}
	\res_{\lz s,\frac32-s\pz}Z(s,w)=\frac{\sqrt\pi\sin\bfrac{\pi s}{2}\Gamma\lz s-\frac12\pz\zeta(2s-1)}{2(2\pi)^s}.
	\end{equation}
\end{lemma}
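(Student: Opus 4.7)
The strategy is to apply the explicit functional equation~\eqref{ExplicitFunctionalEquationForZ} from Theorem~\ref{ExtensionOfZ(s,w)}, which gives $\mathbf{Z}(s,w) = B(s)\mathbf{Z}(1-s, s+w-1/2)$ as an identity of vectors of 16 functions. The crucial observation is that under the substitution $w \mapsto w' = s+w-1/2$, the polar line $s+w = 3/2$ on the left is mapped to $w' = 1$ on the right, which is precisely where $\mathbf{Z}(1-s, w')$ has poles: by Theorem~\ref{ExtensionOfZ(s,w)}, the component $Z(1-s, w'; \psi, \psi')$ has a simple pole at $w' = 1$ with residue $\zeta_2(2(1-s))/2 = \zeta_2(2-2s)/2$ exactly when $\psi' = \psi_1$, and is holomorphic at $w'=1$ otherwise.

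Concretely, I would extract the first component of the functional equation, expressing $Z(s,w) = Z(s,w;\psi_1,\psi_1)$ as a linear combination of $Z(1-s, s+w-1/2;\psi_j, \psi_{j'})$ over $j, j' \in \{\pm 1, \pm 2\}$, with coefficients read off from the first row of $B(s)$. These coefficients arise from the functional equations of the Dirichlet $L$-functions $L(s,\chi_{m_0}\psi_j)$, together with Euler factors at $2$ that enter because of the passage between $L(s,\chi)$ and $L_2(s,\chi)$. Taking the residue at $w = 3/2 - s$, only the four columns with $\psi_{j'} = \psi_1$ survive, each contributing $\zeta_2(2-2s)/2$, and I would obtain
\[\res_{w=3/2-s} Z(s,w) = \frac{\zeta_2(2-2s)}{2}\sum_{j\in\{\pm1,\pm2\}}B_{1,j}(s),\]
where $B_{1,j}(s)$ denotes the entry of $B(s)$ in the $(\psi_1,\psi_1)$-row and $(\psi_j,\psi_1)$-column. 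Each $B_{1,j}(s)$ is a product of a power of the conductor $q_j\in\{1,4,8,8\}$ of $\psi_j$, a gamma ratio $\Gamma((1-s+a_j)/2)/\Gamma((s+a_j)/2)$ with $a_j\in\{0,1\}$ according to whether $\psi_j$ is even or odd, a sign from the root number, and an Euler factor at $2$.

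Finally, using \eqref{RatioOfGammaFactors} I would rewrite each gamma ratio as a multiple of $\sin(\pi s/2)\Gamma(1-s)$ (for the even characters $\psi_1, \psi_2$) or of $\cos(\pi s/2)\Gamma(1-s)$ (for the odd characters $\psi_{-1}, \psi_{-2}$). Combining with $\zeta_2(2-2s) = (1-2^{2s-2})\zeta(2-2s)$ and the functional equation $\zeta(2-2s) = 2^{3-2s}\pi^{1-2s}\sin(\pi s/2)\cos(\pi s/2)\Gamma(2s-1)\zeta(2s-1)$, together with the duplication formula $\Gamma(2s-1) = 2^{2s-2}\pi^{-1/2}\Gamma(s-1/2)\Gamma(s)$ and the reflection formula $\Gamma(1-s)\Gamma(s) = \pi/\sin(\pi s)$, the expression should collapse to the right-hand side of~\eqref{ResidueOns+w=3/2}. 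The main obstacle I anticipate is the bookkeeping: the four characters $\psi_{\pm 1}, \psi_{\pm 2}$ contribute with distinct conductors, parities, Euler factors at $2$, and signs, producing a mixture of $\sin(\pi s/2)$ and $\cos(\pi s/2)$ terms, and the whole content of the lemma is that after summing these four contributions and combining with the Euler factor $(1-2^{2s-2})$ and the $\cos(\pi s/2)$ and $\Gamma(s)$ hidden in $\zeta(2-2s)$, everything conspires to leave the clean expression $\sqrt\pi\sin(\pi s/2)\Gamma(s-1/2)\zeta(2s-1)/(2(2\pi)^s)$.
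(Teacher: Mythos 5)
Your top-level strategy coincides with the paper's: apply the functional equation $\mathbf{Z}(s,w)=B(s)\mathbf{Z}(1-s,s+w-1/2)$, observe that the polar line $s+w=3/2$ pulls back to the line $w'=1$, and read off the residue from Theorem~\ref{ExtensionOfZ(s,w)}. The observation that $Z(1-s,w';\psi,\psi')$ has a pole at $w'=1$ exactly when $\psi'=\psi_1$, with residue $\zeta_2(2-2s)/2$, is also correct. Where the proposal goes wrong is in your picture of the matrix $B(s)$. You claim the relevant nonzero entries of the $(\psi_1,\psi_1)$-row lie in the columns $(\psi_j,\psi_1)$ with $j$ ranging over $\{\pm1,\pm2\}$ (second character fixed at $\psi_1$, first character varying), and you expect all four to contribute a residue that then has to "conspire" to give a clean answer. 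This is backwards. The $(s,w)\mapsto(1-s,s+w-1/2)$ functional equation arises from the functional equation of $L_2(s,\chi_{m_0}\psi)$ applied in the $n$-sum with the \emph{first} index $\psi$ held fixed at $\psi_1$; what gets mixed by the residue-class decomposition of $m_0$ is the \emph{second} index. Concretely, Blomer's explicit formula (28), reproduced as \eqref{ExplicitFunctionalEquationForZ}, expresses $Z(1-u,u+v-1/2;\psi_1,\psi_1)$ as a combination of $Z(u,v;\psi_1,\psi_j)$ for $j\in\{\pm1,\pm2\}$ -- first index $\psi_1$ throughout, second varying. Of these four, only $Z(u,v;\psi_1,\psi_1)$ has $\psi'=\psi_1$ and hence a pole at $v=1$, so the residue is picked up from a single term, not four.

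Your description of the individual entries of $B(s)$ is also off: you model each entry on the functional equation of the modulus-$8$ Dirichlet $L$-function $L(s,\psi_j)$, producing per-$j$ conductors $q_j\in\{1,4,8,8\}$, parities $a_j$, and gamma ratios. In fact the gamma and conductor factor in \eqref{ExplicitFunctionalEquationForZ} is a \emph{single} overall prefactor $\pi^{-u+1/2}\Gamma(u/2)/[(4^{1-u}-4)\Gamma((1-u)/2)]$, coming from the functional equation of $L(s,\chi_{m_0}\psi_1)$ with conductor depending on $m_0$ (not on $\psi_j$), and the four terms are distinguished only by polynomial-in-$2^u$ weights $-4^u$, $4^u-2$, $2^u-2^{1-u}$ coming from Euler factors at $2$. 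Once this is realized, the residue computation is a one-line substitution: multiply the prefactor by $-4^u\cdot\zeta_2(2u)/2$, change back to $(s,w)$, and simplify using \eqref{RatioOfGammaFactors} and the $\zeta$ functional equation. No cancellation among four $\sin/\cos$ contributions is needed. Carried out as written, your plan would produce spurious terms.
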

\begin{proof}
	We use the functional equation (28) in \cite{Blo}, from which it follows that \begin{equation}\label{ExplicitFunctionalEquationForZ}\begin{aligned}
	&Z(1-u,u+v-1/2)=\frac{\pi^{-u+\frac12}\Gamma\lz\frac u2\pz}{\lz 4^{1-u}-4\pz\Gamma\lz\frac{1-u}2\pz}\cdot\Bigg(-4^uZ(u,v;\psi_1,\psi_1)+\\&+\lz 4^u-2\pz Z(u,v;\psi_1,\psi_{-1})+\lz 2^u-2^{1-u}\pz \lz Z(u,v;\psi_1,\psi_2)+ Z(u,v;\psi_1,\psi_{-2})\pz\Bigg).
	\end{aligned}\end{equation}
	Under the change of variables $(s,w)=(1-u,u+v-1/2)$, the line $v=1$ transforms to the line $s+w=3/2$. Since $v=1$ is a polar line of $Z(u,v;\psi,\psi')$ if and only if $\psi'=\psi_1$, the residue comes only from the first term in the parenthesis on the right-hand side of \eqref{ExplicitFunctionalEquationForZ}, and is given by $$\res_{\lz1-u,u+\frac12\pz}Z(u,v;\psi_1,\psi_1)=\frac{\pi^{-u+\frac12}\Gamma\bfrac u2\lz-4^u\pz\zeta_2(2u)}{2\lz4^{1-u}-4\pz\Gamma\bfrac{1-u}{2}}=\frac{\pi^{-u+\frac12}\Gamma\bfrac u2\zeta(2u)}{2\cdot4^{1-u}\Gamma\bfrac{1-u}{2}},$$ so we have $$
	\begin{aligned}
	\res_{\lz s,\frac32-s\pz}Z(s,w)&=\frac{\pi^{s-\frac12}\Gamma\lz\frac {1-s}2\pz\zeta(2-2s)}{2\cdot4^s\Gamma\lz\frac{s}{2}\pz}=\frac{\sqrt\pi\sin\bfrac{\pi s}{2}\Gamma\lz s-\frac12\pz\zeta(2s-1)}{2(2\pi)^s},
	\end{aligned}$$ where the last equality follows after using the functional equation for $\zeta(s)$ and the formula \eqref{RatioOfGammaFactors}.
\end{proof}
We are now ready to prove Theorem~\ref{MainTheoremSmoothed}:
\begin{proof}[Proof of Theorem \ref{MainTheoremSmoothed}]
	We move the integral from \eqref{TheoremAfter2Shifts} further to the left. According to the discussion at the end of Section \ref{SectionMultipleDirichletSeries}, we know that except the line $s+w=3/2$, the integrand has no poles with $\re(s+w)\geq1$, or with $\re(s+w)>3/4$ if we assume RH. Hence we have \begin{equation}\label{ShiftingPast3/2}\begin{aligned}
	\lz\frac{1}{2\pi i}\pz^2&\int\limits_{(3/4)}\int\limits_{(3/4+\epsilon)}A(s,w)X^wY^s\hat\phi(w)\hat\psi(s)dwds\\[5pt]&=
	\lz\frac{1}{2\pi i}\pz^2\int\limits_{(3/4)}\int\limits_{(\delta')}A(s,w)X^wY^s\hat\phi(w)\hat\psi(s)dwds+\\[5pt]&+\frac1{2\pi i}\int\limits_{(3/4)}X^{\frac32-s}Y^{s}\hat\phi\lz\frac32-s\pz\hat\psi\lz s\pz\res_{\lz s,\frac32-s\pz}A(s,w)ds,
	\end{aligned}\end{equation} where $\delta'=1/4+\epsilon$, or $\epsilon$ under RH. Using Lemma \ref{LemmaResidue3/2} and \eqref{DefinitionOfZet}, we have $$\res_{\lz s,\frac32-s\pz}A(s,w)=\frac{\res_{\lz s,\frac32-s\pz}Z(s,w)}{\zeta_2(2)}=\frac{\sqrt\pi\sin\bfrac{\pi s}{2}\Gamma\lz s-\frac12\pz\zeta(2s-1)}{2\zeta_2(2)(2\pi)^s}.$$ Therefore the second integral on the right-hand side of \eqref{ShiftingPast3/2} is \begin{equation}
	\begin{aligned}
	\frac{2X^{\frac 32}}{i\pi^{\frac52}}\int\limits_{(3/4)}\bfrac{Y}{2\pi X}^s\hat\phi\lz\frac32-s\pz\hat\psi(s)\Gamma\lz s-\frac12\pz\sin\bfrac{\pi s}2\zeta(2s-1)ds.
	\end{aligned}
	\end{equation}
	Hence we have \begin{equation}\label{FinalComputationSmoothedSum}
	\begin{aligned}
	\sw&=\frac{\hat\phi(1)\hat\psi\bfrac12XY^{1/2}+\hat\psi(1)\hat\phi\bfrac12YX^{1/2}}{\pi^2}+\\&\hspace{-20pt}+\frac{2X^{\frac32}}{i\pi^{\frac52}}\int\limits_{(3/4)}\bfrac{Y}{2\pi X}^s\hat\phi\lz\frac32-s\pz\hat\psi(s)\Gamma\lz s-\frac12\pz\sin\bfrac{\pi s}2\zeta(2s-1)ds+\\&\hspace{-20pt}+\bfrac1{2\pi i}^2\int\limits_{(3/4)}\int\limits_{(\delta')}A(s,w)X^wY^s\hat\phi(w)\hat\psi(s)dwds+O\lz XY^\delta+YX^\delta\pz\\[5pt]&\hspace{-60pt}=\frac2{\pi^2}\cdot X^{3/2}\cdot D\lz\frac YX;\phi,\psi\pz+O\lz XY^\delta+YX^\delta\pz,
	\end{aligned}
	\end{equation}
	where the last equality follows after trivially bounding the second integral, and using $X^{\delta'}Y^{\frac34}\ll XY^\delta+YX^\delta$.
\end{proof}	
\msection{Removing the smooth weights}

In this section, we show how to remove the smooth weights from Theorem~\ref{MainTheoremSmoothed} and prove Theorem \ref{MainTheorem}. We choose the weights $\phi=\psi$ to be a smooth function which is $1$ on the interval $ \left[\frac1U,1-\frac1U\right]$ for some $U$ to be chosen later, and 0 outside of $(0,1)$, and which satisfies \begin{equation}\label{SizeOfMellinTransform}
|\hat\phi(\sigma+it)|\ll_{j,\sigma}\frac{U^{j-1}}{1+|t|^j}
\end{equation} for all $j\geq1$. Then using the P\'olya-Vinogradov inequality (see (3.1) in \cite{CFS}), we have \begin{equation}\label{DifferenceSmoothAndNonsmooth}
|S(X,Y)-S(X,Y;\phi,\psi)|\ll\frac{X^{3/2}+Y^{3/2}}{U}\log(XY).
\end{equation}

Now we need to estimate the dependence on $U$ of the error term in the computation of $S(X,Y;\phi,\psi)$. These come from the following:
\begin{itemize}
	\item The error from the polar lines $s=1$ and $w=1$;
	\item The error from the shifted integral;
	\item The difference of the main terms.
\end{itemize}

The error from the polar lines $s=1$ is
\begin{equation}\label{Error from polar line s=1}
\ll XY^{\sigma}\lab\int_{(\delta)}\frac{\hat\psi(s)\zeta_2(2s)}{\zeta_2(2s+1)}ds\rab,
\end{equation} where $\delta=\epsilon$, or $-1/4+\epsilon$, and similarly for the error from the polar line $w=1$. We have $\frac{1}{\zeta_2(2\delta+2it+1)}\ll1$ in both cases by \eqref{ZetaNearCriticalLine}.

We can shift the integral \eqref{Error from polar line s=1} to any vertical line $\re(s)=\sigma$ with $1/4\geq\sigma\geq\delta$, and bound the integral using the Cauchy-Schwarz inequality as
\begin{equation}\label{key}
\begin{aligned}
&\ll\int_{(\sigma)}|\hat\psi(s)\zeta(2s)|ds\\
&\ll\lz\int_{(\sigma)}\frac{|\zeta(1-2s)|^2}{(1+|s|)^{1+\epsilon}}ds\pz^{\frac12}\lz\int_{(\sigma)}|\hat\psi(s)\chi(2s)|^2(1+|s|)^{1+\epsilon}ds\pz^{\frac12}\\
&\ll U^{j-1}\int_{(\sigma)}(1+|s|)^{2-4\sigma-2j+\epsilon}ds\\
&\ll U^{1/2-2\sigma+\epsilon},
\end{aligned}
\end{equation}
where the first integral on the second line converges by \eqref{Second moment of zeta}, and we took $j=3/2-2\sigma+\epsilon.$

A similar computation for the polar line $w=1$ gives the same result with $X,Y$ interchanged, so the error from these terms is 
\begin{equation}\label{key}
(XY^\sigma+YX^{\sigma})U^{1/2-2\sigma+\epsilon}.
\end{equation}
\medskip\medskip

For the error coming from the shifted integral, we need to estimate 
\begin{equation}\label{Error from shifted double integral}
\lab\int_{(\sigma)}\int_{(\omega)}A(s,w)\hat\phi(w)\hat\psi(s)X^wY^sdwds\rab.
\end{equation}
We have \begin{equation}\label{key}
A(s,w)=\frac{Z(s,w)}{\zeta_2(2s+2w-1)},
\end{equation}
so the integral is \begin{equation}\label{key}
\ll X^\omega Y^\sigma\int_{(\sigma)}\int_{(\omega)}\lab Z(s,w)\hat\phi(w)\hat\psi(s)\rab dwds
\end{equation} provided $\sigma+\omega>1+\epsilon$ or $>3/4+\epsilon$ if we assume RH. We take $\sigma=\omega=1/2+\epsilon$ and estimate the double integral using the Cauchy-Schwarz inequality as follows:
\begin{equation}\label{key}
\begin{aligned}
&\int_{(\frac12+\epsilon)}\int_{(\frac12+\epsilon)}\lab Z(s,w)\hat\phi(w)\hat\psi(s)\rab dwds\\
&\ll\lz\int\limits_{(\frac12+\epsilon)}\int\limits_{(\frac12+\epsilon)}\frac{\lab Z(s,w)\rab^2}{(1+|s|)^{1+\epsilon}(1+|w|)^{1+\epsilon}}dwds\pz^{\frac12}\times\\
&\times\lz\int\limits_{(\frac12+\epsilon)}\int\limits_{(\frac12+\epsilon)}\lab\hat\phi(w)\hat\psi(s)\rab^2(1+|s|)^{1+\epsilon}(1+|w|)^{1+\epsilon} dwds\pz^{\frac12}\\
&\ll U^{j_1+j_2-2}\lz\int\limits_{(\frac12+\epsilon)}\int\limits_{(\frac12+\epsilon)}(1+|s|)^{1+\epsilon-2j_1}(1+|w|)^{1+\epsilon-2j_2} dwds\pz^{1/2}\ll\\
&\ll U^{\epsilon},
\end{aligned}
\end{equation} where the integral on the second line converges by \eqref{Second moment of Z(s,w)}, and we took $j_1=j_2=1+\epsilon.$
It follows that the error from the shifted integral is \begin{equation}\label{key}
\ll (XY)^{1/2+\epsilon}U^{\epsilon}.
\end{equation}
\medskip\medskip

The error from the difference of the main terms is
\begin{equation}\label{Difference of main terms}
\begin{aligned}
E(X,Y;\phi,\psi)&=\frac{2}{\pi^2}X^{3/2}\lz D\lz\frac YX;\phi,\psi\pz-D\bfrac YX\pz \\\
&\hspace{-50pt}\ll XY^{1/2}\lab \hat\phi(1)\hat\psi(1/2)-2\rab+YX^{1/2}\lab\hat \psi(1)\hat\phi(1/2)-2\rab+\\
&\hspace{-50pt}+X^{3/2}\int_{(3/4)}\lab\bfrac{Y}{2\pi X}^s\Gamma\lz s-\frac12\pz\sin\bfrac{\pi s}{2}\zeta(2s-1)\rab\times\\
&\hspace{-50pt}\hspace{35pt}\times\lab\hat\phi(3/2-s)\hat\psi(s)-\frac1{s(3/2-s)}\rab ds\ll\\
&\hspace{-50pt}\ll XY^{1/2}\lab \hat\phi(1)\hat\psi(1/2)-2\rab+YX^{1/2}\lab\hat \psi(1)\hat\phi(1/2)-2\rab+\\
&\hspace{-50pt}+(XY)^{3/4}\int_{(3/4)}\lz1+|t|\pz^{-\frac14}\lab\zeta(2s-1)\rab\lab\hat\phi\lz\frac32-s\pz\hat\psi(s)-\frac1{s\lz\frac32-s\pz}\rab ds.
\end{aligned}
\end{equation}
We estimate the Mellin transforms in the following lemma:
\begin{lemma} For $s=\sigma+it,$ $0<\sigma<1$ and $\phi,\psi$ as above, we have
	
	\begin{equation}\label{Mellin transform estimate 1}
	\hat\phi(1)\hat\psi(1/2)=2+O\lz U^{-1/2}\pz,
	\end{equation} and
	\begin{equation}\label{Mellin transform estimate 2}
	\lab\hat\phi(3/2-s)\hat\psi(s)-\frac1{s(3/2-s)}\rab\ll\frac{\lab\hat\phi(3/2-s)\rab}{U^\sigma}+\frac{1}{|s|U^{3/2-\sigma}}.
	\end{equation}
\end{lemma}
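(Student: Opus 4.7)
My plan is to prove both estimates by direct comparison of $\hat\phi, \hat\psi$ with the Mellin transform of the indicator $\unitint$, which satisfies $\hat{\unitint}(u) = 1/u$ for $\re(u) > 0$. By construction, $\phi$ and $\psi$ agree with $\unitint$ on $[1/U, 1-1/U]$ and differ from it by a quantity of absolute value at most $1$ on the two boundary strips of length $1/U$. This support information is what will drive every error estimate.

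For the first estimate \eqref{Mellin transform estimate 1}, I would write
\[ \hat\phi(1)\hat\psi(1/2) - 2 = \hat\phi(1)\left(\hat\psi(1/2) - 2\right) + 2\left(\hat\phi(1) - 1\right). \]
Then $\hat\phi(1) - 1 = -\int_0^1 (1-\phi(x))\,dx \ll U^{-1}$, since the integrand has support of measure $\ll U^{-1}$ and is bounded by $1$, while $\hat\psi(1/2) - 2 = -\int_0^1 (1-\psi(x)) x^{-1/2}\,dx \ll \int_0^{1/U} x^{-1/2}\,dx + U^{-1} \ll U^{-1/2}$, the dominant contribution coming from the strip near $x=0$ where $x^{-1/2}$ is large. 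Multiplying and keeping the dominant error term gives $\hat\phi(1)\hat\psi(1/2) = 2 + O(U^{-1/2})$.

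For the second estimate \eqref{Mellin transform estimate 2}, the key step is the algebraic decomposition
\[ \hat\phi(3/2-s)\hat\psi(s) - \frac{1}{s(3/2-s)} = \hat\phi(3/2-s)\left(\hat\psi(s) - \frac{1}{s}\right) + \frac{1}{s}\left(\hat\phi(3/2-s) - \frac{1}{3/2-s}\right), \]
which isolates the two shapes $|\hat\phi(3/2-s)|$ and $1/|s|$ appearing in the claimed bound. Each difference is an integral of $(\psi(x)-1) x^{s-1}$ or $(\phi(x)-1) x^{1/2-s}$ over the two boundary strips. Using $|\psi-1|, |\phi-1| \leq 1$ and integrating directly, the strip near $x=0$ contributes $\ll_\sigma U^{-\sigma}$ to $|\hat\psi(s) - 1/s|$ and $\ll_\sigma U^{-(3/2-\sigma)}$ to $|\hat\phi(3/2-s) - 1/(3/2-s)|$, while the strip near $x=1$ contributes only $\ll U^{-1}$ in each case (subsumed in the relevant range of $\sigma$). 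Feeding these two bounds into the decomposition gives exactly the asserted inequality.

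The proof is entirely elementary; there is no substantive obstacle. The only design choice is the particular split in the second estimate: it is chosen so that the bound matches the shape needed in \eqref{Difference of main terms}, where the first error term will be paired with the rapidly decaying $|\hat\phi(3/2-s)|$ inside the $s$-integral, while the second piece is controlled by the extra factor $1/|s|$ that aids convergence.
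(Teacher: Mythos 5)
Your proof is correct and takes essentially the same approach as the paper: both compare $\hat\phi,\hat\psi$ with the Mellin transform of $\unitint$ and estimate the two boundary strips of length $\ll 1/U$, and for the second estimate both use the identical add-and-subtract/triangle-inequality decomposition isolating $\hat\phi(3/2-s)(\hat\psi(s)-1/s)$ and $\frac{1}{s}(\hat\phi(3/2-s)-\frac{1}{3/2-s})$. The only cosmetic difference is in \eqref{Mellin transform estimate 1}, where you write an explicit telescoping identity while the paper multiplies out the split integrals directly; the arithmetic is the same.
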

\begin{proof}
	For \eqref{Mellin transform estimate 1}, we have \begin{equation}\label{key}
	\begin{aligned}
	&\hat\phi(1)\hat\psi(1/2)=\int_{0}^\infty \phi(u)du\int_0^\infty\psi(v)v^{-1/2}dv=\\
	&=\lz\lz\int\limits_{0}^{\frac 1U}+\int\limits_{1-\frac 1U}^{1}\pz\phi(u)du+1-\frac2U\pz\lz\lz\int\limits_{0}^{\frac 1U}+\int\limits_{1-\frac1U}^{1}\pz\frac{\psi(v)}{\sqrt v}dv+\int\limits_{\frac1U}^{1-\frac1U}\frac{1}{\sqrt v}dv\pz=\\
	&=\lz1+O\lz\frac 1U\pz\pz\lz O\lz\frac1{\sqrt U}\pz+2\lz\sqrt{1-\frac{1}{U}}-\frac1{\sqrt U}\pz\pz=\\
	&= 2+O\lz U^{-1/2}\pz.
	\end{aligned}
	\end{equation}
	
	For \eqref{Mellin transform estimate 2}, we first use the triangle inequality to get \begin{equation}\label{key}
	\begin{aligned}
	&\lab\hat\phi(3/2-s)\hat\psi(s)-\frac1{s(3/2-s)}\rab\\
	&\leq\lab\hat\phi(3/2-s)\rab\lab\hat\psi(s)-\frac1s\rab+\lab\frac1s\rab\lab\hat\phi(3/2-s)-\frac{1}{3/2-s}\rab.
	\end{aligned}
	\end{equation}
	Now we have \begin{equation}\label{key}
	\begin{aligned}
	\lab\hat\psi(s)-\frac1s\rab&\leq\int_{0}^\infty\lab\psi(u)-1\rab u^{\sigma-1}du=\\
	&=\int_{0}^{\frac1U}\lab\psi(u)-1\rab u^{\sigma-1}du+\int_{1-\frac1U}^{1}\lab\psi(u)-1\rab u^{\sigma-1}du\ll\\
	&\ll \frac1{U^\sigma},
	\end{aligned}
	\end{equation}
	and similarly \begin{equation}\label{key}
	\lab\hat\phi(3/2-s)-\frac1{3/2-s}\rab\leq\int_{0}^\infty\lab\phi(u)-1\rab u^{1/2-\sigma}du\ll\frac{1}{U^{3/2-\sigma}}.
	\end{equation}
\end{proof}
Using this Lemma in \eqref{Difference of main terms}, we get
\begin{equation}\label{key}
E(X,Y;\phi,\psi)\ll\frac{XY^{1/2}+YX^{1/2}}{\sqrt U}+\bfrac{XY}{U}^{3/4}.
\end{equation}
\medskip\medskip

Putting everything together, the error in both cases is \begin{equation}\label{key}
\frac{X^{\frac32+\epsilon}+Y^{\frac32+\epsilon}}{U}+\frac{XY^{\frac12}+YX^{\frac12}}{\sqrt U}+\bfrac{XY}{U}^{\frac34}+(XY^{\sigma}+YX^{\sigma})U^{\frac12-2\sigma+\epsilon}+(XY)^{\frac12}U^{\epsilon},
\end{equation} where $1/4\geq\sigma\geq\delta$. In both cases, the best choice is $\sigma=1/4$. Then we can take $U=\min\{X,Y\}$ and obtain the error  \begin{equation}\label{key}
XY^{1/4+\epsilon}+YX^{1/4+\epsilon}
\end{equation} in the range $Y^{2/5}\leq X\leq Y^{5/2}.$
In the remaining range, the result follows from \eqref{PolyaVinogradov} and the asymptotic expansion of $C(\alpha)$ \eqref{AsymptoticEstimateForC1} and \eqref{AsymptoticEstimateForC2}, together with the fact that $C(\alpha)=D(\alpha)$ as will be proved in the next section.

\msection{Proving that $C(\alpha)=D(\alpha)$.}\label{SectionComparingMainTerms}
In this section, we show that $C(\alpha)=D(\alpha)$. Recall that \begin{equation}\label{D}
\begin{aligned}
D(\alpha)=\sqrt\alpha+\alpha-\frac{2\sqrt\pi}{2\pi i}\int\limits_{(3/4)}\bfrac{\alpha}{2\pi}^s\cdot\frac{\Gamma\lz s-\frac32\pz\sin\bfrac{\pi s}{2}\zeta(2s-1)}{s}ds.
\end{aligned}
\end{equation}
We shift the integral to the left, capturing the pole at $s=\frac12$, which contributes $$2\sqrt\pi\bfrac{\alpha}{2\pi}^{1/2}\cdot\frac{-\sin(\pi/4)\zeta(0)}{1/2}=\alpha^{1/2}.$$ The horizontal integrals vanish by \eqref{StirlingEstimate} and a convexity estimate for $\zeta(s)$, so \begin{equation}\label{DShifted}
\begin{aligned}
D(\alpha)&=\alpha-\frac{2\sqrt\pi}{2\pi i}\int\limits_{(-1/4)}\bfrac{\alpha}{2\pi}^s\cdot\frac{\Gamma\lz s-\frac32\pz\sin\bfrac{\pi s}{2}\zeta(2s-1)}{s}ds\\&=\alpha-\frac{2\sqrt\pi}{2\pi i}\int\limits_{(1/4)}\alpha^{-s}\cdot\frac{(2\pi)^s\Gamma\lz-s-\frac32\pz\sin\bfrac{\pi s}{2}\zeta(-2s-1)}{s}ds.
\end{aligned}
\end{equation}
This integral is an inverse Mellin transform, so we rewrite $C(\alpha)$ using Mellin inversion.
We have \begin{equation}\label{C}
\begin{aligned}
C(\alpha)&=\alpha+\alpha^{3/2}\frac{2}{\pi}\sum_{k=1}^\infty\frac{1}{k^2}\int_{0}^{1/\alpha}\sqrt y\sin\bfrac{\pi k^2}{2y}dy\\&=\alpha+\frac2\pi\sum_{k=1}^\infty\frac1{k^2}\int_0^1\sqrt u\sin\bfrac{\pi k^2\alpha}{2u}du\\&=\alpha+\frac2{\pi}\cdot\frac{1}{2\pi i}\int\limits_{(c)}\alpha^{-s}\hat f(s)ds,
\end{aligned}
\end{equation}
where \begin{equation}\label{f}
f(x)=\sum_{k=1}^\infty\frac1{k^2}\int_0^1\sqrt u\sin\bfrac{\pi k^2 x}{2u}du.
\end{equation}
For $0<\re(s)<1$, we have \begin{equation}\label{MellinTransformOff1}
\begin{aligned}
\hat f(s)&=\int_0^\infty\sum_{k=1}^\infty\frac1{k^2}\int_0^1\sqrt u\sin\bfrac{\pi k^2 x}{2u}du\ x^{s-1}dx\\&=\sum_{k=1}^\infty\frac1{k^2}\int_0^1\sqrt u\int_0^\infty \sin\bfrac{\pi k^2 x}{2u} x^{s-1}dx\ du,\end{aligned}
\end{equation}
which isn't obvious as the double integral doesn't converge absolutely, but we will justify the interchange of summation and integrals in Lemma \ref{LemmaInterchangeOfOperations}. We can now make a change of variables $y=\frac{\pi k^2x}{2u}$ and obtain
\begin{equation}\label{MellinTransformOff2}
\begin{aligned}
\hat f(s)&=\sum_{k=1}^\infty\frac1{k^2}\int_0^1\sqrt u\int_0^\infty \sin(y) y^{s-1}dy\ \bfrac{2u}{\pi k^2}^sdu\\&=\bfrac{2}{\pi}^s\zeta(2+2s)\int_0^1u^{s+1/2}du\int_0^\infty\sin(y)y^{s-1}dy\\&=\frac{2^s\zeta(2+2s)\Gamma(s)\sin\bfrac{\pi s}{2}}{\pi^s(s+3/2)},
\end{aligned}
\end{equation}
which holds for $0<\re(s)<1,$ so we can take $c=1/4$ in \eqref{C}.
It therefore suffices to show that \begin{equation}\label{ToShow}
-2\sqrt\pi\frac{(2\pi)^s\Gamma\lz-s-\frac32\pz\sin\bfrac{\pi s}{2}\zeta(-2s-1)}{s}=\frac2\pi\cdot\frac{2^s\zeta(2+2s)\Gamma(s)\sin\bfrac{\pi s}{2}}{\pi^s(s+3/2)}.
\end{equation}
The functional equation for the zeta function gives \begin{equation}\label{}
\zeta(2s+2)=\pi^{2s+3/2}\cdot\frac{\Gamma\lz-s-\frac12\pz}{\Gamma(s+1)}\zeta(-2s-1),
\end{equation}
so using $s\Gamma(s)=\Gamma(s+1)$ and $(-s-3/2)\Gamma(-s-3/2)=\Gamma(-s-1/2)$ gives the result.

It remains to justify the interchange of the order of summation and integrations in \eqref{MellinTransformOff1}:
\begin{lemma}\label{LemmaInterchangeOfOperations}
	If $0<\re(s)<1$, it holds that \begin{equation}\label{key}
	\begin{aligned}
	&\int_0^\infty\sum_{k=1}^\infty\frac1{k^2}\int_0^1\sqrt u\sin\bfrac{\pi k^2 x}{2u}du\ x^{s-1}dx\\&=\sum_{k=1}^\infty\frac1{k^2}\int_0^1\sqrt u\int_0^\infty \sin\bfrac{\pi k^2 x}{2u} x^{s-1}dx\ du.
	\end{aligned}
	\end{equation}
\end{lemma}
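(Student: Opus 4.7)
The plan is to justify the interchange by truncating the outer $x$-integral and then using integration by parts on the resulting oscillatory pieces. Fix $T>0$ and restrict $\int_0^\infty$ to $\int_0^T$. On the product space $\N\times [0,1]\times[0,T]$ the absolute value of the summand is at most $\frac{1}{k^2}\sqrt{u}\,x^{\re(s)-1}$, which has finite total measure because $\re(s)>0$ makes $x^{\re(s)-1}$ integrable on $[0,T]$. Fubini--Tonelli therefore permits a free interchange of the $k$-sum with both integrals in the truncated identity, and it remains to pass to the limit $T\to\infty$ on both sides.

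On the left-hand side I would establish the bound $|f(x)|\ll \min(1,1/x)$, which gives $\int_T^\infty |f(x)|\,x^{\re(s)-1}\,dx\to 0$ in the strip $0<\re(s)<1$ by dominated convergence. To get this bound, the substitution $v=1/u$ turns the inner integral into $\int_1^\infty v^{-5/2}\sin(\pi k^2 xv/2)\,dv$, and a single integration by parts bounds this by $\ll 1/(k^2 x)$; combining with the trivial estimate $\leq 2/3$ gives $\ll \min(1,1/(k^2x))$ uniformly in $k$, and summation against $1/k^2$ produces the claimed decay.

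On the right-hand side I would show that the truncated oscillatory integrals $I_k(T,u):=\int_0^T\sin(\pi k^2 x/(2u))\,x^{s-1}\,dx$ are bounded uniformly in $T$ by a constant times $(u/k^2)^{\re(s)}$. Splitting at $A=2u/(\pi k^2)$, the piece over $[0,A]$ is controlled via $|\sin y|\leq|y|$, while the piece over $[A,T]$ is controlled by one integration by parts; here the hypothesis $\re(s)<1$ is what forces the boundary term at $T$ to remain bounded and the remainder integral to converge absolutely as $T\to\infty$. The resulting majorant is summable against $\frac{1}{k^2}\sqrt u$ on $\N\times[0,1]$, so dominated convergence allows passing $T\to\infty$ through both the sum and the $u$-integral. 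The main obstacle is extracting \emph{uniform}-in-$T$ bounds from the two integration-by-parts estimates in precisely the strip $0<\re(s)<1$, so that DCT can be invoked on each side simultaneously; once that is secured, the identity follows.
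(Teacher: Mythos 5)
Your proposal is correct and follows essentially the same strategy as the paper's proof: truncate the $x$-integral, use Fubini--Tonelli to justify the interchange on the truncated domain, and then pass the truncation parameter to infinity by dominated convergence, with the uniform-in-$T$ majorant coming (after the change of variables $y=\pi k^2 x/(2u)$) from the uniform boundedness of the partial integrals of $\int_0^\infty\sin(y)\,y^{s-1}\,dy$ in the strip $0<\re(s)<1$. The only genuine addition on your side is the explicit $\min(1,1/x)$ bound on $f(x)$ used to control the tail of the left-hand side; the paper instead simply interprets that side as the improper integral $\lim_{A\to\infty}\int_{1/A}^{A}$, so your version proves a slightly stronger statement (absolute convergence of the outer integral) but is not a different method.
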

\begin{proof}
	We have \begin{equation}\label{key}
	\begin{aligned}
	&\int_0^\infty\sum_{k=1}^\infty\frac1{k^2}\int_0^1\sqrt u\sin\bfrac{\pi k^2 x}{2u}du\ x^{s-1}dx\\&=\lim_{A\rightarrow\infty}\int_{1/A}^A\sum_{k=1}^\infty\frac1{k^2}\int_0^1\sqrt u\sin\bfrac{\pi k^2 x}{2u}du\ x^{s-1}dx.
	\end{aligned}
	\end{equation}
	We can now interchange the integrals and summation, because \begin{equation}\label{key}
	\begin{aligned}
	\sum_{k=1}^\infty\frac1{k^2}\int_0^1\int_{1/A}^A\lab\sqrt u\sin\bfrac{\pi k^2 x}{2u}x^{s-1}\rab dx\ du\ll A,
	\end{aligned}
	\end{equation}
	so \begin{equation}\label{key}
	\begin{aligned}
	&\lim_{A\rightarrow\infty}\int_{1/A}^A\sum_{k=1}^\infty\frac1{k^2}\int_0^1\sqrt u\sin\bfrac{\pi k^2 x}{2u}du\ x^{s-1}dx\\&=\lim_{A\rightarrow\infty}\sum_{k=1}^\infty\frac1{k^2}\int_0^1\sqrt u\int_{1/A}^A\sin\bfrac{\pi k^2 x}{2u}x^{s-1}dx\ du.
	\end{aligned}
	\end{equation}
	To insert the limit inside the sum and integral, we use the Dominated convergence theorem with the bound $\sqrt u|\int_{1/A}^A\sin\bfrac{\pi k^2 x}{2u}x^{s-1}dx|\leq K\sqrt u$ for an absolute constant $K$ independent of $u$ and $A$ in our range, which holds because $\int_0^\infty\sin(y)y^{s-1}dy$ converges.
\end{proof}

\end{document}